\documentclass[12pt]{amsart}
\usepackage{amsmath}
\usepackage{amsfonts}
\usepackage{amssymb}
\usepackage{amsthm}
\usepackage{enumitem}
\usepackage{todonotes}
\usepackage{hyperref}

\newtheorem{theorem}{Theorem}
\newtheorem{proposition}[theorem]{Proposition}
\newtheorem{lemma}[theorem]{Lemma}
\newtheorem{kor}[theorem]{Corollary}
\theoremstyle{definition}
\newtheorem*{example}{Example}
\newtheorem*{rem}{Remark}

\begin{document}
\title[On Diophantine pairs and triples of triangular numbers]{On Diophantine pairs and triples of triangular numbers}
\author{Marija {Bliznac Trebje\v{s}anin}}
\address{Faculty of Science, University of Split, Ru\dj{}era Bo\v{s}kovi\'{c}a 33, 21000 Split, Croatia}
\email{marbli@pmfst.hr}

\begin{abstract}
We investigate Diophantine pairs and triples of triangular numbers with the property $D(a)$ for a non-zero integer $a$. We prove that if a triangular number belongs to a $D(a)$-pair, it can be extended to infinitely many $D(a)$-triples of triangular numbers. Additionally, we determine infinite families of integers $a$ that admit such pairs, as well as families for which no $D(a)$-pairs can exist.
\end{abstract}

\maketitle

\noindent 2020 {\it Mathematics Subject Classification:}  11D09, 11B37
\\ \noindent Keywords: triangular numbers, recurrences, Diophantine m-tuples, generalized Pellian equations

\section{Introduction}

Let $a\neq0$ be an integer. We call a set of $m$ distinct positive integers a $D(a)$-$m$-tuple (or a Diophantine $m$-tuple with the property $D(a)$) if the product of any two of its distinct elements increased by $a$ is a perfect square. 
When $a=1$, the classical case introduced by Diophantus, such a set is simply referred to as a Diophantine $m$-tuple.

The study of such sets traces back to Fermat, who discovered the first Diophantine quadruple $\{1, 3, 8, 120\}$. A central question in this area of research is determining the maximum possible size of these sets. It is now established that $D(1)$-quintuples \cite{htz}, $D(4)$-quintuples \cite{petorke_btf}, and $D(-1)$-quadruples \cite{bcm} do not exist. Another prominent direction of research involves restricting the elements of $D(a)$-$m$-tuples to specific integer sequences, such as Fibonacci numbers (see, for instance, \cite{duje_gen, fujita_luca_fib}). For a comprehensive overview of various generalizations and related problems, we refer the reader to \cite[Section 1.5]{duje_book}.

For a positive integer $n$, the $n$-th triangular number, denoted by $T_n$, is defined as the sum of the first $n$ positive integers, given by the explicit formula
$$T_n=\frac{n(n+1)}{2}.$$

From the main result of \cite{mnde}, it follows that  
$\{T_n,T_{n+4},T_{4n^2+20n+8}\}$  
is a Diophantine triple of triangular numbers. In \cite{mbt_triangular}, the author presented a construction showing that for any positive integers $k$ and $n$, the triangular number $T_n$ is a member of infinitely many $D(k^2)$-triples containing only triangular numbers.

Let $a\neq 0$ and $n\geq 1$ be integers. We are looking for positive integers $m$ and $r$ satisfying
$$T_nT_m+a=r^2.$$
This equation can be rewritten as a generalized Pellian equation 
\begin{equation}\label{pellova_x}
    x^2-n(n+1)y^2=16a-n(n+1),
\end{equation}
where the unknowns are $x=4r$ and $y=2m+1$. An analysis of the properties of this equation will provide the main results concerning the existence of $D(a)$-pairs and $D(a)$-triples of triangular numbers. 

The existence of a solution to equation \eqref{pellova_x}, with an odd integer $y$, depends on properties of both $a$ and $n$. As shown in \cite{mbt_triangular}, for $a=k^2$, $k\in\mathbb{N}$, we have a solution for every integer $n\geq 1$. Determining the conditions on $a$ and $n$ under which a solution exists will be the primary focus of Sections \ref{section:ex_pairs} and \ref{section:member_pairs}. In the general case for an integer $a\neq 0$, by analyzing the sequences $(m_k)_{k \in \mathbb{N}}$ arising from the solutions to equation \eqref{pellova_x}, we will prove that for any two successive terms of the sequence, the set $\{T_{m_k}, T_{m_{k+1}}\}$ forms a $D(a)$-pair. This naturally leads to our main result.

\begin{theorem}\label{tm_2}
Let $a\neq 0$ and $n\geq 1$ be integers such that there exists a positive integer $m$ for which $\{T_n,T_m\}$ is a $D(a)$-pair. Then $T_n$ is an element of infinitely many $D(a)$-triples of triangular numbers.
\end{theorem}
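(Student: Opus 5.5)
The plan is to produce the triples explicitly. I will take $\{T_n, T_{m_k}, T_{m_{k+1}}\}$, where $(m_k)$ is a sequence of indices coming from one class of solutions of \eqref{pellova_x}. By hypothesis there is $m$ with $T_nT_m+a=r^2$, so $(x_0,y_0)=(4r,2m+1)$ solves \eqref{pellova_x} with $x_0\equiv 0 \pmod 4$ and $y_0$ odd. Put $D=n(n+1)$ and $N=16a-D$.

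\textbf{Step 1: generating the sequence.} Since $(2n+1)^2-4D=1$, the number $\varepsilon=(2n+1)+2\sqrt{D}$ is a unit of norm $1$. Multiplying $x_0+y_0\sqrt D$ by powers of $\varepsilon$ gives further solutions through
\[
x_{k+1}=(2n+1)x_k+2Dy_k,\qquad y_{k+1}=2x_k+(2n+1)y_k .
\]
The parity conditions are preserved along this recursion:
\begin{itemize}
\item $y_{k+1}\equiv y_k \pmod 2$, so every $y_k$ is odd;
\item $D$ is even, so $2Dy_k\equiv 0 \pmod 4$, and hence $4\mid x_{k+1}$.
\end{itemize}
Therefore $m_k=(y_k-1)/2$ and $r_k=x_k/4$ are integers with $T_nT_{m_k}+a=r_k^2$. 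Explicitly, $m_{k+1}=x_k+(2n+1)m_k+n$. Both sequences also satisfy the second-order recurrence $u_{k+1}=(4n+2)u_k-u_{k-1}$ (up to an additive constant for $m_k$). Choosing the sign of $x_0$ if necessary and using $x_k>0$ for large $k$, $(m_k)$ is eventually strictly increasing and positive. Hence $m_k\ne n$ and the $T_{m_k}$ are distinct for all large $k$.

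\textbf{Step 2: consecutive terms form a $D(a)$-pair.} This is the heart of the argument and the main obstacle. Writing $T_m=(y^2-1)/8$, I need
\[
(y_k^2-1)(y_{k+1}^2-1)+64a
\]
to be a square (of $8s$, with $s$ an integer). The tools are the two norm relations obtained from
\[
(x_k+y_k\sqrt D)(x_{k+1}-y_{k+1}\sqrt D)=N\,\overline{\varepsilon},
\]
namely
\[
x_kx_{k+1}-Dy_ky_{k+1}=(2n+1)N,\qquad x_{k+1}y_k-x_ky_{k+1}=2N,
\]
together with $Dy_k^2=x_k^2-N$.

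I will try $8s=y_ky_{k+1}+c$ for a constant $c$ depending on $n$, or else a combination of the form $\alpha x_kx_{k+1}+\beta y_ky_{k+1}+\gamma$ with coefficients in $\mathbb{Q}(n)$. The verification is then:
\begin{itemize}
\item substitute $y_{k+1}=2x_k+(2n+1)y_k$;
\item eliminate $x_k^2$ using the Pell relation;
\item match coefficients.
\end{itemize}
If a constant $c$ does not suffice, I will first compute small cases numerically, for instance $a=1$, $n=1$ or $n=2$, to guess $s$, and then confirm the identity symbolically. After that I must check that $s$ is an integer, using $y_k$ odd and $4\mid x_k$.

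\textbf{Step 3: conclusion.} Combining Steps 1 and 2, each set $\{T_n,T_{m_k},T_{m_{k+1}}\}$ with $k$ large consists of three distinct positive triangular numbers. The product of any two elements, plus $a$, is a square: pairs involving $T_n$ are handled by Step 1, and the remaining pair by Step 2. Since $m_k\to\infty$, these triples are pairwise distinct, giving infinitely many $D(a)$-triples containing $T_n$.
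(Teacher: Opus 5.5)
\textbf{Verdict: genuine gap.} Your overall plan is the paper's: take one class of solutions $(x_k,y_k)$ of \eqref{pellova_x} generated by $\varepsilon=(2n+1)+2\sqrt{D}$, put $m_k=(y_k-1)/2$, and use the triples $\{T_n,T_{m_k},T_{m_{k+1}}\}$. Steps 1 and 3 are fine. The gap is Step 2, which is the only non-routine part of the theorem. You leave it as a search plan (``I will try \dots; if this does not suffice, I will compute small cases numerically''). As written, nothing shows that $T_{m_k}T_{m_{k+1}}+a$ is a square, so the proposal does not yet prove the statement.

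Your first ansatz is the right one, with $c=-(2n+1)$. Eliminate $x_k$ between $y_{k+1}=2x_k+(2n+1)y_k$ and $x_k^2-Dy_k^2=N$. Squaring $2x_k=y_{k+1}-(2n+1)y_k$ and using $(2n+1)^2-4D=1$ gives
\[
y_{k+1}^2-2(2n+1)y_ky_{k+1}+y_k^2=4N=64a-4D,
\]
which is the paper's Lemma \ref{lem_y}. Hence
\[
(y_k^2-1)(y_{k+1}^2-1)+64a=y_k^2y_{k+1}^2-2(2n+1)y_ky_{k+1}+4D+1=\bigl(y_ky_{k+1}-(2n+1)\bigr)^2 ,
\]
so $T_{m_k}T_{m_{k+1}}+a=s^2$ with $s=(y_ky_{k+1}-(2n+1))/8$. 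Integrality follows from your parity facts: $y_ky_{k+1}=2x_ky_k+(2n+1)y_k^2\equiv 2n+1 \pmod 8$, because $4\mid x_k$ and $y_k^2\equiv 1 \pmod 8$.

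Separately, your second norm relation has the wrong sign. In fact $x_{k+1}y_k-x_ky_{k+1}=2Dy_k^2-2x_k^2=-2N$. The argument above does not need it, but it would have misled a coefficient-matching attempt.
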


The remainder of this paper is organized as follows. In Section \ref{section:triple}, we cover the basic properties of the associated generalized Pellian equation and prove Theorem \ref{tm_2}. In Section \ref{section:ex_pairs}, we explore the conditions on the integer $a$ for which $D(a)$-pairs can or cannot exist, providing infinite families for both cases. Finally, in Section \ref{section:member_pairs}, we investigate the necessary conditions an integer $n$ must satisfy for $T_n$ to belong to a $D(a)$-pair, with a specific focus on the case $a=-1$, and describe an algorithm to generate all such values up to a given bound.

\section{Generalized Pellian equation and existence of $D(a)$-triples}\label{section:triple}

Let $a\neq 0$ and $n\geq 1$ be integers. We are searching for positive integers $m$ and $r$ such that $T_n \cdot T_m +a= r^2$, or more precisely,
$$\frac{n(n+1)}{2} \cdot \frac{m(m+1)}{2}+a = r^2.$$
After some algebraic manipulation, this yields the generalized Pellian equation
\begin{equation}\label{pellova_m}
    (4r)^2 - n(n+1)(2m+1)^2 = 16a-n(n+1),
\end{equation}
which can be written in the simpler form of the equation \eqref{pellova_x} in terms of the variables $x$ and $y$. 

From the theory of generalized Pellian equations (see \cite[Theorem 109]{nagell}), we know that if equation \eqref{pellova_x} has a solution, it has a finite number of classes of solutions. As described in \cite[Theorem 108a]{nagell}, each class is generated by a fundamental solution $(x^*,y^*)$ satisfying the inequalities
\begin{equation}\label{ineq_granice_fund}
0<y^*\leq \frac{Y_1}{\sqrt{2(X_1-1)}}\sqrt{|N|},\quad 0\leq|x^*|\leq\sqrt{\frac{1}{2}(X_1-1)\cdot |N|},
\end{equation}
where $N=16a-n(n+1)$ and $(X_1,Y_1)=(2n+1,2)$ is the fundamental solution of the corresponding Pell equation 
\begin{equation}\label{eq_pellova_1}
X^2-n(n+1)Y^2=1.
\end{equation}

We consider a specific class of solutions to equation \eqref{pellova_x} and denote its fundamental solution by $(x^*,y^*)$. Then every positive integer solution $(x,y)$ belonging to this class can be expressed as
$$x+y\sqrt{n(n+1)}=(x^*+y^*\sqrt{n(n+1)})\left(X_1+Y_1\sqrt{n(n+1)}\right)^k,$$
for some integer $k \geq 0$ if $x^* > 0$, and $k \geq 1$ if $x^* \leq 0$. 

For generalized Pellian equation  \eqref{pellova_x}, the following relation holds: 
\begin{equation}\label{niz_y_x}
    y_{k+1}=X_1y_k+Y_1x_k=(2n+1)y_k+2x_k,\quad k\geq 0.
\end{equation}

\begin{lemma}\label{lema_svojstva_xy}
    If $(x,y)$ is a solution to the equation \eqref{pellova_x} with $y$ an odd number, then  $x$ is divisible by $4$ and $y^*\geq 1$ is also an odd number. Moreover, if $a$ is not a perfect square, then $y^*\geq 3$. 
\end{lemma}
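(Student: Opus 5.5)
Reducing the equation modulo powers of $2$ settles the divisibility claims; the fundamental solution is then handled by pulling it back from $(x,y)$ along the class recursion, and the bound $y^*\geq 3$ comes from the case $y^*=1$.

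\emph{Divisibility of $x$.} Let $(x,y)$ solve \eqref{pellova_x} with $y$ odd. Then $y^2\equiv 1 \pmod 8$, and $n(n+1)$ is even, so $x^2 = n(n+1)(y^2-1)+16a$. Since $n(n+1)\equiv 0\pmod 2$ and $y^2-1\equiv 0 \pmod 8$, we get $x^2\equiv 0\pmod{16}$. Hence $4\mid x$.

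\emph{Parity of $y^*$.} The solution $(x,y)$ lies in some class, whose fundamental solution $(x^*,y^*)$ satisfies $x+y\sqrt{n(n+1)}=(x^*+y^*\sqrt{n(n+1)})(2n+1+2\sqrt{n(n+1)})^k$. Multiplying by the inverse unit $2n+1-2\sqrt{n(n+1)}$ a total of $k$ times recovers $(x^*,y^*)$. One step of this backward recursion is $y' = (2n+1)y-2x$. Since $x$ is even and $2n+1$ is odd, $y'\equiv y \pmod 2$, so oddness of $y$ is preserved at every step. By the first part, each intermediate $x$ is also divisible by $4$, so evenness of $x$ is preserved as well. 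Therefore $y^*$ is odd, and $y^*\geq 1$ because $y^*>0$ by \eqref{ineq_granice_fund}.

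\emph{The case $a$ not a square.} Suppose $y^*=1$. Then $(x^*)^2 - n(n+1) = 16a-n(n+1)$, so $(x^*)^2=16a$. This forces $a=(x^*/4)^2$, a perfect square; note $x^*\neq 0$ because $a\neq 0$. Hence if $a$ is not a perfect square, $y^*\neq 1$. Since $y^*$ is odd and positive, $y^*\geq 3$.

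The only point needing care is the parity transfer to the fundamental solution, which requires that the unit $2n+1+2\sqrt{n(n+1)}$ has even $Y$-coordinate. The rest is direct computation.
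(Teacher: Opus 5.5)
Your proof is correct and follows the paper's argument: the same computation $x^2-16a=n(n+1)(y^2-1)$ with $8\mid y^2-1$ gives $4\mid x$, and $y^*=1$ forces $(x^*)^2=16a$. The one addition is your explicit parity transfer to the fundamental solution via $y'=(2n+1)y\mp 2x$, which the paper leaves implicit; note that $2x$ is even for any $x$, so you do not need $x$ to be even for that step.
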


\begin{proof} 
Equation \eqref{pellova_x} can be rewritten as $x^2 - 16a = n(n+1)(y^2 - 1)$. 

Since $y$ is odd, $y^2 - 1$ is a multiple of $8$ and $n(n+1)(y^2 - 1)$ is divisible by $16$, implying that $x$ is divisible by $4$.

Finally,  the assumption $y=1$ leads to $x^2 = 16a$, implying that $a$ is a perfect square. 
\end{proof}

For $a=-1$ it holds that equations \eqref{pellova_m} and \eqref{pellova_x} are equivalent, meaning that the positive integer solutions $(x,y)$ of equation \eqref{pellova_x} are always of the form $(4r,2m+1)$ for some positive integers $r$ and $m$.  

\begin{kor}\label{kor_svojstva_xy_minus_1}
    Let $a=-1$. If $(x,y)$ is a solution to the equation \eqref{pellova_x} then $y$ is an odd number and $x$ is divisible by $4$. Moreover, if $y$ is a positive integer, then $y\geq 3$. 
\end{kor}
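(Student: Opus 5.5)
With $a=-1$, equation \eqref{pellova_x} reads
$$x^2-n(n+1)y^2=-16-n(n+1),$$
which can be rewritten as
$$x^2+16=n(n+1)(y^2-1).$$
The plan is to show first that $y$ is odd, and then to quote Lemma \ref{lema_svojstva_xy} for the rest.

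\textbf{Step 1: $y$ is odd.} Suppose instead that $y$ is even. Then $y^2-1\equiv 3\pmod 4$, so $y^2-1$ is odd and has a prime divisor $p\equiv 3\pmod 4$; such a $p$ exists because a positive odd number all of whose prime factors are $1\bmod 4$ is itself $1\bmod 4$.

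If $y=0$, then $x^2+16=-n(n+1)<0$, which is impossible. Otherwise $|y|\geq 2$, so $y^2-1\geq 3$ and the prime $p$ is available. It divides $x^2+16=x^2+4^2$.

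A prime $p\equiv 3\pmod 4$ dividing a sum of two squares must divide both summands. Hence $p\mid 4$, which is impossible since $p$ is odd. This contradiction shows that $y$ is odd.

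Equivalently, one could look at the equation modulo $4$ after noting that $n(n+1)$ is even. The quadratic-residue argument above is the cleaner route, and I expect this step to be the only one with any content.

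\textbf{Step 2: $x$ is divisible by $4$.} Now that $y$ is odd, Lemma \ref{lema_svojstva_xy} applies directly and gives $4\mid x$.

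\textbf{Step 3: $y\geq 3$ when $y>0$.} Since $y$ is a positive odd number, it remains only to exclude $y=1$. For $y=1$ the equation becomes $x^2=16a=-16$, which has no solution. This is the "$a$ not a perfect square" clause of Lemma \ref{lema_svojstva_xy}, since $-1$ is not a perfect square.

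Together, Steps 2 and 3 also give the equivalence of \eqref{pellova_m} and \eqref{pellova_x} noted before the corollary: every positive solution has the form $(x,y)=(4r,2m+1)$ with $r\geq 1$ and $m\geq 1$.
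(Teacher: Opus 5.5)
Your proof is correct and follows the paper's route of deducing the corollary from Lemma \ref{lema_svojstva_xy}. The paper only asserts, without argument, that $y$ must be odd when $a=-1$, and your Step 1 supplies that missing step: a prime $p\equiv 3 \pmod 4$ dividing $y^2-1$ would divide $x^2+4^2$, hence would divide $4$. One caveat: delete the remark that a modulo-$4$ check works equally well. When $4\mid n(n+1)$ (e.g.\ $n=3$), an even $y$ gives only $x^2\equiv 0 \pmod 4$, which is no contradiction, so the quadratic-residue argument is genuinely needed.
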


The sequence of solutions for the second coordinate, $(y_k)_{k \geq 0}$, forms a linear recurrence sequence 
\begin{align}\label{eq_rek_y}
&y_0=y^*,\quad y_1=2x^*+(2n+1)y^*,\nonumber\\
&y_{k}=2(2n+1)y_{k-1}-y_{k-2},\ k \geq 2.
\end{align}
Since $y_k=2m_k+1$, we can express the sequence of indices $(m_k)$ such that $\{T_n,T_{m_k}\}$ is a $D(a)$-pair as follows:
\begin{align}\label{eq_niz_mk}
&m_0=\frac{1}{2}(y^*-1),\quad m_1=\frac{1}{2}(2x^*+(2n+1)y^*-1),\nonumber\\
 &m_{k}=2(2n+1)m_{k-1}-m_{k-2}+2n,\quad k\geq 2.
\end{align}
Note that if $x^* =0$, the value $r=2|x^*|$ is not a positive integer, hence, we should only consider the indices $m_k$ for $k \geq 1$.

\begin{lemma}\label{lem_y}
    For the sequence $(y_k)_{k \geq 0}$ of solutions to the generalized Pellian equation \eqref{pellova_x}, the following identity holds:
$$y_{k+1}^2 - 2(2n+1)y_k y_{k+1} + y_k^2 =  64a - 4n(n+1).$$
\end{lemma}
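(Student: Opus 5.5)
We prove the identity by induction on $k$, using that the quadratic form
$$Q(u,v)=v^2-2(2n+1)uv+u^2$$
is invariant under the recurrence \eqref{eq_rek_y}.

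\emph{Invariance.} Put $(u,v)=(y_{k-1},y_k)$, so that $y_{k+1}=2(2n+1)v-u$. Then
$$y_{k+1}^2-2(2n+1)y_ky_{k+1}+y_k^2 = y_{k+1}\bigl(y_{k+1}-2(2n+1)v\bigr)+v^2 = -u\bigl(2(2n+1)v-u\bigr)+v^2,$$
and the right-hand side equals $u^2-2(2n+1)uv+v^2$. Hence $Q(y_k,y_{k+1})=Q(y_{k-1},y_k)$ for all $k\ge 1$. It therefore suffices to verify the case $k=0$.

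\emph{Base case.} Write $D=n(n+1)$ and use \eqref{niz_y_x} in the form $y_1=(2n+1)y_0+2x_0$, where $(x_0,y_0)=(x^*,y^*)$ is a solution of \eqref{pellova_x}. Then
$$y_1^2-2(2n+1)y_0y_1+y_0^2=\bigl(y_1-(2n+1)y_0\bigr)^2-\bigl((2n+1)^2-1\bigr)y_0^2.$$
Since $(2n+1)^2-1=4D$, this becomes
$$4x_0^2-4Dy_0^2=4\bigl(16a-n(n+1)\bigr)=64a-4n(n+1),$$
which is the claimed value.

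Alternatively, one can skip the induction: the computation in the base case applies verbatim to any consecutive pair $(y_k,y_{k+1})$, because relation \eqref{niz_y_x} holds for every $k\ge 0$ and each $(x_k,y_k)$ solves \eqref{pellova_x}. The only point needing care is the identity $(2n+1)^2-1=4n(n+1)$; there is no real obstacle.
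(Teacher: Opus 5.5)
Your proof is correct, and the alternative you mention at the end is exactly the paper's argument: write $2x_k=y_{k+1}-(2n+1)y_k$ using \eqref{niz_y_x}, square it, and substitute $x_k^2=n(n+1)(y_k^2-1)+16a$. Your main inductive route differs only mildly. The invariance of your form $Q$ under \eqref{eq_rek_y} means you need the Pellian relation only at $k=0$ rather than for every $(x_k,y_k)$, so it is slightly more self-contained, but it reaches the identity through the same core computation.
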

\begin{proof}
From equation \eqref{niz_y_x}, we can express $2x_k$ as $2x_k = y_{k+1} - (2n+1)y_k$. Squaring this identity and applying the relation $x_k^2=n(n+1)(y_k^2-1)+16a$ yields the desired equality. 
\end{proof}

Now we are ready to prove that $\{T_{m_k},T_{m_{k+1}}\}$ forms a $D(a)$-pair for all $k\geq 1$. 

\begin{proposition}
    Let $(m_k)_{k\geq 1}$ be the sequence defined in \eqref{eq_niz_mk}. Then 
    $$T_{m_k}T_{m_{k+1}}+a$$
    is a square of a positive integer. 
\end{proposition}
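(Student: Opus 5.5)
The plan is to rewrite everything in terms of the $y$-coordinates and use the quadratic identity of Lemma \ref{lem_y}. Write $y_k = 2m_k+1$, so that
$$T_{m_k} = \frac{m_k(m_k+1)}{2} = \frac{y_k^2-1}{8}.$$
The first step is then to compute
$$64\bigl(T_{m_k}T_{m_{k+1}}+a\bigr) = (y_k^2-1)(y_{k+1}^2-1)+64a = y_k^2y_{k+1}^2 - \bigl(y_k^2+y_{k+1}^2\bigr) + 1 + 64a .$$

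Next, substitute for $y_k^2+y_{k+1}^2$ using Lemma \ref{lem_y}:
$$y_k^2+y_{k+1}^2 = 2(2n+1)y_ky_{k+1} + 64a - 4n(n+1).$$
The terms in $a$ cancel, and we are left with
$$y_k^2y_{k+1}^2 - 2(2n+1)y_ky_{k+1} + 4n(n+1)+1 .$$
Since $4n(n+1)+1=(2n+1)^2$, this expression is exactly $\bigl(y_ky_{k+1}-(2n+1)\bigr)^2$. Hence
$$T_{m_k}T_{m_{k+1}}+a = \left(\frac{y_ky_{k+1}-2n-1}{8}\right)^2 .$$

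The remaining steps are integrality and positivity of the base.

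For integrality, the left-hand side is an integer and the base is rational. A rational number whose square is an integer is itself an integer, so the base is an integer. One can also see this directly: $y_k,y_{k+1}$ are odd, so $y_ky_{k+1}\equiv 1 \pmod 8$. Then $y_ky_{k+1}-(2n+1)\equiv -2n \pmod 8$, and the rational-square argument forces divisibility by $8$.

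For positivity, which I expect to be the only point needing care, I will use \eqref{niz_y_x}: $y_{k+1}=(2n+1)y_k+2x_k$. For $k\geq 1$ the solutions are positive, i.e. $x_k>0$, and $y_k\geq 1$. Hence $y_{k+1}>2n+1$, and so $y_ky_{k+1}>2n+1$. The base is therefore a positive integer. This is also why the statement is restricted to $k\ge 1$: for $k=0$ one may have $x^*\le 0$.
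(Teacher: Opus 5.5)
Your main argument is correct and follows the paper's route. You rewrite via $y_k=2m_k+1$, use Lemma \ref{lem_y} to eliminate $y_k^2+y_{k+1}^2$, and recognize the result as $\bigl(y_ky_{k+1}-(2n+1)\bigr)^2/64$.

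The one genuine difference is integrality. The paper appeals to an unwritten induction showing $8\mid y_ky_{k+1}-(2n+1)$, using $4\mid x_k$ and the oddness of $y_k$. You instead observe that the left-hand side $N$ is an integer, so $q^2=64N$ with $q=y_ky_{k+1}-(2n+1)$ forces $8\mid q$. This is shorter and needs no congruence bookkeeping. You also check that the base is nonzero, via $y_{k+1}=(2n+1)y_k+2x_k$ with $x_k>0$ for $k\ge 1$; the paper leaves this point implicit.

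However, your ``direct'' aside is false and should be deleted. A product of two odd numbers need not be $\equiv 1 \pmod 8$; only squares of odd numbers are. For example, with $n=1$, $a=-1$ one has $(y_1,y_2)=(9,51)$ and $y_1y_2=459\equiv 3\pmod 8$. Combining your congruence $y_ky_{k+1}-(2n+1)\equiv -2n \pmod 8$ with divisibility by $8$ would force $4\mid n$, which is wrong.

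If you want a correct direct check, substitute $y_{k+1}=(2n+1)y_k+2x_k$. This gives $y_ky_{k+1}-(2n+1)=(2n+1)(y_k^2-1)+2x_ky_k$, which is divisible by $8$ because $8\mid y_k^2-1$ and $4\mid x_k$ by Lemma \ref{lema_svojstva_xy}.
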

\begin{proof}
    Notice that $8T_n+1=(2n+1)^2$ holds for every integer $n$. Then 
    \begin{align*}
        T_{m_k}T_{m_{k+1}}+a&=\frac{(2m_k+1)^2-1}{8}\cdot\frac{(2m_{k+1}+1)^2-1}{8} +a\\
        &=\frac{y_k^2-1}{8}\cdot\frac{y_{k+1}^2-1}{8} +a=\frac{y_k^2y_{k+1}^2-y_k^2-y_{k+1}^2+1+64a}{64}.
    \end{align*}
    By substituting the identity from Lemma \ref{lem_y}, we obtain
    \begin{align*}
        T_{m_k}T_{m_{k+1}}+a
        &=\frac{y_k^2y_{k+1}^2+4n(n+1)+64-2(2n+1)y_ky_{k+1}+1+64a}{64}\\
        &=\left(\frac{y_ky_{k+1}-(2n+1)}{8}\right)^2\\
        &=\left(\frac{2m_km_{k+1}+m_k+m_{k+1}-n}{4}\right)^2.
    \end{align*}
    It remains to show that the expression inside the parentheses is an integer. It is easy to prove by mathematical induction that $y_ky_{k+1}-(2n+1)$ is divisible by $8$ for each $k\geq 0$. Properties from Lemma \ref{lema_svojstva_xy} are used in the proof. 
\end{proof}

This implies that any $T_n$ for which the associated equation \eqref{pellova_m} has a solution is a member of infinitely many $D(a)$-triples $\{T_n,T_{m_k},T_{m_{k+1}}\}$ for $k\geq 1$. Thus, we conclude the proof of Theorem \ref{tm_2}.

\section{Existence of $D(a)$-pairs}\label{section:ex_pairs}

In this section, we will explore for which positive integers $a\neq 0$ the equation \eqref{pellova_x} can have a solution. 

\begin{theorem}\label{thm:modular_9}
Let $a\neq 0$ be a positive integer such that there exists a $D(a)$-pair of triangular numbers. Then $a \not\equiv 2, 5 \pmod 9$.
\end{theorem}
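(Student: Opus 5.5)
The plan is a pure congruence argument modulo $9$. The pair condition says $T_nT_m+a=r^2$ for some integers $n,m\ge 1$ and $r$. The idea is to compute the possible residues of $T_nT_m$ and of $r^2$ modulo $9$, and check that $a\equiv r^2-T_nT_m$ can never be $2$ or $5 \pmod 9$. The Pellian machinery from Section \ref{section:triple} is not needed. Nor is positivity of $a$: the argument works for any $a\neq 0$.

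First I determine the residues of triangular numbers modulo $9$. Since $T_n=n(n+1)/2$ depends only on $n \bmod 18$, it suffices to list $T_0,\dots,T_{17}$ modulo $9$. This gives only the values $0,1,3,6$. A cleaner route is the identity $8T_n+1=(2n+1)^2$, already used in the proof of the Proposition. It gives $8T_n\equiv s^2-1 \pmod 9$ with $s^2\in\{0,1,4,7\}$. Since $8\equiv -1$, this means $T_n\equiv 1-s^2\in\{1,0,6,3\} \pmod 9$.

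Next I compute the products of two elements of $\{0,1,3,6\}$ modulo $9$. Any product involving $0$ is $0$, and $3\cdot3$, $3\cdot 6$, $6\cdot 6$ are all $\equiv 0$. The remaining products are $1,3,6$. So $T_nT_m \bmod 9\in\{0,1,3,6\}$, and in fact $T_nT_m\equiv 1-t^2 \pmod 9$ for some integer $t$. Then $a\equiv r^2+t^2-1 \pmod 9$. The sums of two squares modulo $9$ are $\{0,1,2,4,5,7,8\}$, missing exactly $3$ and $6$. Subtracting $1$, the possible values of $a \bmod 9$ are $\{8,0,1,3,4,6,7\}$, which excludes exactly $2$ and $5$.

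There is no real obstacle; the only care needed is the closure claim that the set of residues of $T_n$ is closed under multiplication modulo $9$. I would verify this directly by the short table above rather than via the $1-t^2$ parametrization.
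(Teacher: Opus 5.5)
Your proposal is correct and follows the paper's argument: both reduce $a\equiv r^2-T_nT_m \pmod 9$ using $r^2\in\{0,1,4,7\}$ and $T_nT_m\in\{0,1,3,6\}$, and check that the differences miss exactly $2$ and $5$. Your $1-t^2$ and sum-of-two-squares repackaging is only a tidier way to organise the same case check, and you are right that positivity of $a$ is never used.
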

\begin{proof}
    Let $r$ be a positive integer such that
    $$T_n\cdot T_m+a=r^2.$$
    Then $r^2\equiv 0,1,4,7\pmod 9$ and $T_n\cdot T_m\equiv 0,1,3,6\pmod 9$. From $a\equiv r^2- T_n\cdot T_m$, by considering all possibilities, we get 
    $a\equiv 0,1,3,4,6,7,8\pmod 9.$
\end{proof}

We will now prove that the converse does not hold. For each remaining class modulo $9$, we give an example of an integer $a$ that admits no $D(a)$-pair of triangular numbers. For illustrative purposes, we detail the proofs for $a=12$ and $a=-37$ ($a\equiv 3,8\pmod 9$). In the cases $a\in\{252,487,-158,42,52\}$, i.e., $a\equiv 0,1,4,6,7\pmod 9$, the proofs proceed analogously and are omitted for brevity. 

\begin{proposition} \label{prop:nonexist}
Let $a \in \{12, -37\}$. For any positive integer $n$, the equation \eqref{pellova_x}
does not have any integer solutions $(x, y)$ such that $y$ is odd. Consequently, there exists no pair of triangular numbers $\{T_n, T_m\}$ with the property $D(12)$ or $D(-37)$.
\end{proposition}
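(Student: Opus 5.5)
The plan is to find a purely local obstruction. I will find a modulus $M$ such that the congruence
$$r^2 - a \equiv T_nT_m \pmod M$$
has no solution in integers $n,m,r$. This is equivalent to equation \eqref{pellova_x} having no solution with $y$ odd. Indeed, by Lemma~\ref{lema_svojstva_xy}, any such solution has $4\mid x$, so writing $x=4r$ and $y=2m+1$ turns \eqref{pellova_x} back into $T_nT_m+a=r^2$. Because $2T_n=n(n+1)$, the residue $T_n \bmod M$ depends only on $n \bmod 2M$. So the whole question reduces to a finite check over $n,m \bmod 2M$ and $r \bmod M$. If some $M$ works, the statement holds for every $n$ at once, including $m\le 0$ and any sign of $r$.

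First I will record what modulus $9$ gives, since Theorem~\ref{thm:modular_9} already shows it does not rule these values out. The triangular residues mod $9$ are $\{0,1,3,6\}$ and the squares are $\{0,1,4,7\}$.
\begin{itemize}
\item For $a=12\equiv 3 \pmod 9$, the condition $r^2-3\in\{0,1,3,6\}$ forces $r^2\equiv 0$ or $4$. If $3\mid r$, then $T_nT_m\equiv 6$, so $\{T_n,T_m\}\equiv\{1,6\}\pmod 9$. If $r\equiv\pm2 \pmod 9$, then $T_nT_m\equiv 1$, so $T_n\equiv T_m\equiv 1\pmod 9$.
\item For $a=-37\equiv 8 \pmod 9$, the analogous short case list is obtained the same way.
\end{itemize}
These conditions pin down $n \bmod 18$ and $m \bmod 18$ in each case. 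They will be combined with a second prime-power modulus $q$ via the Chinese remainder theorem, giving $M=9q$.

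The second step is to choose $q$. I would try $q\in\{5,7,8,11,13,16\}$ in turn. For each, I compute the set of pairs $(T_n \bmod q,\ n \bmod 2q)$ and the squares mod $q$. A single prime alone will not suffice: for example, modulo $5$ the products of triangular residues already cover $\{0,1,3,4\}$, which includes every value of $r^2-12$. So the obstruction must come from the interaction of the mod-$9$ case constraints with the constraints mod $q$. Those interactions do not couple $n$ across coprime moduli, which means a genuinely joint argument is needed. Either the obstruction exists modulo a composite $q$ sharing structure with $9$ (for instance modulo $27$ or $81$), or an obstruction exists modulo a single prime power.

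I therefore expect the real work, and the main obstacle, to be locating the right modulus. What I would try first is a direct computer search over $M\in\{27,81,16,32,5\cdot 9,7\cdot 9,\dots\}$ for an $M$ where
$$\{r^2-a \bmod M\}\cap\{T_nT_m \bmod M\}=\emptyset.$$
Once such an $M$ is found, the written proof consists of listing the two residue sets and observing that they are disjoint.

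If no small modulus works, the fallback is a quadratic-reciprocity argument. I would factor $r^2-a$ in $\mathbb{Z}[\sqrt{a}]$, or use that $(2n+1)^2$ and $(2m+1)^2$ appear in $64(r^2-a)=((2n+1)^2-1)((2m+1)^2-1)$. The aim would be to show that some prime $p$ with $\left(\frac{a}{p}\right)=-1$ must divide $r^2-a$ to an odd power, which is impossible. I would try the local approach first, since it also matches the paper's remark that the other values $a\in\{252,487,-158,42,52\}$ are handled analogously.
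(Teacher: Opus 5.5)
\textbf{Genuine gap: there is no congruence obstruction, so the main plan cannot succeed.} For both $a=12$ and $a=-37$, the equation $T_nT_m+a=r^2$ is solvable in $\mathbb{Z}_p$ for every prime $p$. By the Chinese remainder theorem it is then solvable modulo every $M$; this is the same decoupling you observed yourself. Here are the local solutions:
\begin{itemize}
\item \emph{Powers of $2$.} Modulo $2^k$ every residue $t$ is triangular, because $8t+1\equiv 1\pmod 8$ is the square of an odd $2$-adic integer. So taking $T_n\equiv 1$ and $T_m\equiv 1-a$ gives $T_nT_m+a\equiv 1$.
\item \emph{Primes $p\ge 5$.} Choose $N=2n+1$ with $N^2\not\equiv 1$ and $N^2\not\equiv 1+64a\pmod p$; at most four residues are excluded. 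Then $c^2-(N^2-1)M^2=64a-(N^2-1)$ is a conic with unit coefficients. It has a point modulo $p$ with nonzero gradient, and Hensel's lemma lifts it.
\item \emph{Powers of $3$,} where your mod-$9$ analysis was pointing. Take $n=m=(3^k-1)/2$. Then $2n+1=3^k$ and $64(T_n^2+a)\equiv 1+64a\pmod{3^k}$. Here $1+64a$ is $769$ for $a=12$ and $-2367=9\cdot(-263)$ for $a=-37$. Both are squares modulo every power of $3$, since $769\equiv -263\equiv 1\pmod 3$. Concretely, $T_{13}^2+12\equiv 2^2\pmod{27}$ and $T_{40}^2-37\equiv 12^2\pmod{81}$.
\end{itemize}
So the two residue sets you want to be disjoint always intersect, and the proposed search over $M$ will never terminate.

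\textbf{The fallback does not close the gap.} It names neither a prime nor a mechanism. Since the equation is everywhere locally solvable, any such argument would have to be a genuinely global obstruction, and you have not exhibited one.

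\textbf{The missing idea is a global descent, which is how the paper proceeds.} Write $N_0=2n+1$, $M_0=2m+1$, $c=2|x|$; the equation becomes $c^2=(N_0^2-1)(M_0^2-1)+64a$.
\begin{itemize}
\item For fixed $N_0$, take a solution with $M_0$ minimal.
\item Apply the automorphism induced by the unit $(N_0,1)$ of $X^2-(N_0^2-1)Y^2=1$, namely $(c,M_0)\mapsto(N_0c-(N_0^2-1)M_0,\,N_0M_0-c)$.
\item This map preserves parity, and it cannot give $|M_0'|=1$ because $a$ is not a square.
\item It strictly decreases $M_0$ whenever $(N_0-1)M_0<c<(N_0+1)M_0$.
\end{itemize}
Minimality therefore forces $2M_0^2(N_0+1)\le 64a+1-N_0^2$ or $2M_0^2(N_0-1)\le N_0^2-64a-1$. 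This leaves finitely many pairs $(N_0,M_0)$: six for $a=12$ and thirteen for $a=-37$. Direct substitution shows none of them makes $c^2$ a perfect square. The argument uses the size of solutions, which is exactly the information no congruence argument can see.
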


\begin{proof}
Assume, for the sake of contradiction, that there exists a positive integer $n$ such that equation \eqref{pellova_x} has an integer solution $(x, y)$ with an odd integer $y \ge 1$. 
By setting $y = 2m+1$ for some positive integer $m$, equation \eqref{pellova_x} can be rewritten as:
$$ x^2 = 16a + n(n+1)((2m+1)^2-1). $$
Notice that $n(n+1) = \frac{N_0^2-1}{4}$. Hence, it is useful to introduce the notation $N_0 = 2n+1$, $M_0 = 2m+1$ and $c = 2|x|$, multiply the equation by $4$ and obtain the following symmetric formulation:
\begin{equation} \label{eq:sym_combined_mixed}
c^2 = (N_0^2-1)(M_0^2-1) + 64a.
\end{equation}
By symmetry, we may assume without loss of generality that $m \geq n$, which implies $M_0 \geq N_0$. In what follows, we restrict our analysis of equation \eqref{eq:sym_combined_mixed} to this case.

If $M_0 = 1$ (or $N_0 = 1$), then  $c^2 = 64a$. 
 For $a=12$, this yields $c^2 = 768$, which is not a perfect square.
    For $a=-37$, this yields $c^2 = -2368$, which is trivially impossible.

Hence, any valid solution to \eqref{eq:sym_combined_mixed} requires $N_0 \ge 3$ and $M_0 \ge 3$. 

Equation \eqref{eq:sym_combined_mixed} can be rewritten as a generalized Pellian equation in variables $X=c$ and $Y=M_0$:
$$ X^2 - (N_0^2-1)Y^2 = 64a - (N_0^2-1). $$
If it admits any solutions with $M_0 \ge N_0 \ge 3$, then by the well-ordering principle, there must exist a solution for a fixed $N_0$ such that the positive integer $M_0$ is minimal. 
Let $(c, M_0)$ be this minimal integer solution. The fundamental solution $(N_0, 1)$ to the associated Pell equation $X^2 - (N_0^2-1)Y^2 = 1$ allows us to define a new solution $(c', M_0')$:
$$ (c', M_0') := \bigl(N_0 c - (N_0^2-1)M_0,\;\; N_0 M_0 - c\bigr). $$
Observe that the parity of the new elements $(c', M_0')$ is preserved, and that $(c', |M_0'|)$ is also a solution to equation \eqref{pellova_x}. 
Furthermore, $|M_0'| \neq 1$; otherwise, we would have either $16a=(m-n)^2$ or $16a=(m+n+1)^2$, which occurs only when $a$ is a perfect square. Since $M_0'$ is odd, we conclude that $|M_0'| \ge 3$.

For the new solution $(c', |M_0'|)$ to satisfy the  inequality $|M_0'| < M_0$, the value of $c$ must fall strictly between $(N_0-1)M_0$ and $(N_0+1)M_0$. Squaring this condition yields two inequalities that must hold simultaneously:
\begin{align}
    c^2 &< (N_0+1)^2 M_0^2 \implies 2M_0^2(N_0+1) > 64a + 1 - N_0^2, \label{upper_bound} \\
    c^2 &> (N_0-1)^2 M_0^2 \implies 2M_0^2(N_0-1) > -64a - 1 + N_0^2. \label{lower_bound}
\end{align}
If both bounds hold, we obtain a new integer solution with $3 \le |M_0'| < M_0$, which contradicts our assumption that $M_0$ is minimal. Therefore,  at least one of these inequalities must fail. This restricts the minimal solution to a finite set of exceptional pairs $(N_0, M_0)$.

\textbf{Case $a=12$:} For a positive parameter, \eqref{lower_bound} holds trivially. Inequality \eqref{upper_bound} fails when $2M_0^2(N_0+1) \le 769 - N_0^2$. This yields exactly six pairs with $M_0 \ge N_0 \ge 3$: $(3,3)$, $(3,5)$, $(3,7)$, $(3,9)$, $(5,5)$, and $(5,7)$. Substituting these into \eqref{eq:sym_combined_mixed} generates values for $c^2$ which are not perfect squares.

 \textbf{Case $a=-37$:} For a negative parameter, inequality \eqref{upper_bound} holds trivially. The lower bound does not hold when $2M_0^2(N_0-1) \le 2367 + N_0^2$. From equation \eqref{eq:sym_combined_mixed}, the condition $c^2 \ge 0$ imply that $(N_0^2 - 1) (M_0^2 - 1) \geq 2368$. Combining these two inequalities yields 13 pairs: $(3,19)$, $(3,21)$, $(3,23)$, $(5,11)$, $(5,13)$, $(5,15)$, $(5,17)$, $(7,9)$, $(7,11)$, $(7,13)$, $(9,9)$, $(9,11)$, and $(11,11)$. However, none of these pairs yields a perfect square for $c^2$.

Since the supposed minimal solution $(c, M_0)$ must belong to this set of pairs, and none of these pairs yield a perfect square for $c^2$, such a minimal solution cannot exist. Consequently, the equation has no solutions for $a=12$ or $a=-37$.
\end{proof}

\begin{rem}
It is worth noting that applying the same algorithm for $a=-1$ leads to the pair $(N_0, M_0) = (3,3)$. Substituting these values into equation \eqref{eq:sym_combined_mixed} yields $c=0$. This pair corresponds to $n=1$ and $m=1$, which exactly produces the fundamental solution $(x^*, y^*) = (0,3)$ for the initial generalized Pellian equation \eqref{pellova_x}. Since $m=n=1$, this trivial case does not yield a $D(-1)$-pair of distinct triangular numbers. However, we can generate an index $m>n$ by applying the recurrence relation \eqref{eq_niz_mk}, and we obtain $m_1 = 4$, i.e., the $D(-1)$-pair $\{T_1, T_4\}$.
\end{rem}

While it is known from \cite{mbt_triangular} that $D(a)$-pairs of triangular numbers exist whenever $a$ is a perfect square, we now construct additional infinite families of integers $a$ that admit such pairs. 

Using the well-known property that the sum of two consecutive triangular numbers is a perfect square, $T_{k+1} + T_k = (k+1)^2$, $k\geq 1$, we can construct explicit $D(a)$-pairs by fixing $a$ in relation to the sequence of triangular numbers. Thus the next proposition gives us infinite families of both positive and negative numbers $a$ for which we know $D(a)$-pairs of triangular numbers exist.

\begin{proposition} \label{prop:a_is_triangular}
Let $k \ge 1$ be an integer.
\begin{enumerate}
    \item If $a = T_k$, then the set $\{T_1, T_{k+1}\}$ is a $D(a)$-pair of triangular numbers.
    \item If $a = 1 - T_k$, then the set $\{T_2, T_{k+1}\}$ is a $D(a)$-pair of triangular numbers.
\end{enumerate}
\end{proposition}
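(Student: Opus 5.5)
Both parts will be checked by direct computation, using the identity $T_{k+1}+T_k=(k+1)^2$ quoted just before the statement together with $T_1=1$ and $T_2=3$. No appeal to the Pellian machinery of Section \ref{section:triple} is needed, because the pairs are given explicitly.

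For part (1), take $a=T_k$. Then $T_1\cdot T_{k+1}+a=T_{k+1}+T_k=(k+1)^2$, which is the square of a positive integer. It remains to check that the two elements are distinct: since $k\ge 1$ we have $k+1\ge 2$, so $T_{k+1}\ge 3>1=T_1$. Hence $\{T_1,T_{k+1}\}$ is a $D(T_k)$-pair.

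For part (2), take $a=1-T_k$. Then
$$T_2\cdot T_{k+1}+a=3T_{k+1}+1-T_k.$$
I will write $3T_{k+1}-T_k=(T_{k+1}+T_k)+2(T_{k+1}-T_k)=(k+1)^2+2(k+1)$, using $T_{k+1}-T_k=k+1$. This gives
$$3T_{k+1}+1-T_k=(k+1)^2+2(k+1)+1=(k+2)^2,$$
again a positive square. Distinctness needs $T_{k+1}\neq T_2$, i.e.\ $k\neq 1$. For $k=1$ we get $a=0$, which the paper excludes by its standing assumption $a\neq 0$, so the relevant range is $k\ge 2$; in that range $T_{k+1}\ge T_3=6>3$.

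I expect no real obstacle here. The only point needing care is this degenerate case $k=1$ in part (2), where $a=0$ and the two elements coincide, and I will state explicitly that it is excluded.
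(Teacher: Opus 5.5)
Your proof is correct and is essentially the paper's argument. Both parts are verified by direct computation, giving $(k+1)^2$ and $(k+2)^2$ respectively, and your rewriting $3T_{k+1}-T_k=(T_{k+1}+T_k)+2(T_{k+1}-T_k)$ is just a tidier way to do the paper's algebra. Your observation that $k=1$ in part (2) gives $a=0$ and $T_{k+1}=T_2$, so that case must be excluded, is a valid point that the paper's proof overlooks, since it claims the result for all $k\ge 1$.
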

\begin{proof}
Let $k\geq 1$ and $a = T_k$. Since $T_1 = 1$, we have:
$$
T_1 \cdot T_{k+1} + a = 1 \cdot \frac{(k+1)(k+2)}{2} + \frac{k(k+1)}{2}= (k+1)^2. 
$$
Thus, $\{T_1, T_{k+1}\}$ forms a $D(T_k)$-pair.

Now, let $a = 1 - T_k$. Substituting $T_2 = 3$, we have:
$$ T_2 \cdot T_{k+1} + a = 3 \cdot \frac{(k+1)(k+2)}{2} + 1 - \frac{k(k+1)}{2}=(k+2)^2. $$
Thus, $\{T_2, T_{k+1}\}$ is a $D(1 - T_k)$-pair for all $k \ge 1$.
\end{proof}

\section{Finding triangular numbers which form a $D(a)$-pair}\label{section:member_pairs}

In \cite{mbt_triangular} it has been shown that for every integer $n\geq 1$ and $a=k^2$, $k\geq 1$, triangular number $T_n$ is a member of a $D(a)$-pair of triangular numbers. It is natural to explore this further and for a fixed $a\neq0$, ask which triangular numbers $T_n$ can be members of a $D(a)$-pair of triangular numbers.

\begin{theorem} \label{thm:generalized}
Let $a$ be a non-zero integer. The triangular number $T_n$ can be a member of a $D(a)$-pair $\{T_n,T_m\}$, $m\geq 1$, only if all prime factors of $n(n+1)$ belong to the following set: the prime $2$, prime divisors of $a$, and odd primes $p$ for which $a$ is a quadratic residue modulo $p$. 
\end{theorem}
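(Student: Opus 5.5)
The plan is to reduce the existence of a $D(a)$-pair to a congruence modulo each odd prime $p$ dividing $n(n+1)$. Suppose $\{T_n,T_m\}$ is a $D(a)$-pair, so by \eqref{pellova_x} there are integers $x=4r$ and $y=2m+1$ with
$$x^2-n(n+1)y^2=16a-n(n+1).$$
Rewriting this as in the proof of Lemma \ref{lema_svojstva_xy} gives
$$x^2-16a=n(n+1)(y^2-1).$$
Now let $p$ be an odd prime dividing $n(n+1)$. Reducing modulo $p$ gives $x^2\equiv 16a \pmod p$.

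Since $p$ is odd, $4$ is invertible modulo $p$. Setting $u\equiv 4^{-1}x \pmod p$, we get $u^2\equiv a\pmod p$. There are two cases.
\begin{enumerate}
\item If $p\mid a$, then $p$ is a prime divisor of $a$, which is one of the allowed types of primes.
\item If $p\nmid a$, then $u\not\equiv 0 \pmod p$, so $a$ is a nonzero quadratic residue modulo $p$, which is the other allowed type.
\end{enumerate}

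The only prime not covered by this argument is $p=2$. It is included in the allowed set unconditionally, so no argument is needed for it. Because $n$ and $n+1$ are coprime, every prime factor of $n(n+1)$ divides exactly one of them. This plays no role in the argument, since the congruence above holds for any prime divisor of the product.

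No step is a real obstacle; the argument is a direct reduction of the Pellian equation modulo $p$. The only point needing care is that when $p\mid a$ the congruence says nothing about residuosity, which is why prime divisors of $a$ are listed separately in the statement. The statement could be sharpened using Lemma \ref{lema_svojstva_xy}, which gives $4\mid x$, and prime powers could be analysed more finely, but neither is needed for the necessary condition as stated.
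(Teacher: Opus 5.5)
Your proposal is correct and matches the paper's proof: reduce \eqref{pellova_x} modulo an odd prime $p\mid n(n+1)$ to get $x^2\equiv 16a \pmod p$; then either $p\mid a$, or, since $16$ is invertible modulo $p$, $a$ is a quadratic residue modulo $p$. The only cosmetic difference is that the paper writes the square root as $r=x/4$ directly, whereas you invert $4$ modulo $p$.
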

\begin{proof}
Assume that $n$ is a positive integer such that there exists a $D(a)$-pair $\{T_n,T_m\}$. Then   integers $x=4r$ and $y=2m+1$ are solutions of the equation \eqref{pellova_x}. Let $p$ be an arbitrary odd prime divisor of $n(n+1)$. 

After reducing equation \eqref{pellova_x} modulo $p$, we have:
$$ x^2 \equiv 16a \pmod p. $$
If $p \mid a$, the congruence trivially becomes $x^2 \equiv 0 \pmod p$, which is satisfied for all $x$ such that $p \mid x$. 

If $p \nmid a$, then $16a$ is not divisible by $p$. We can divide the congruence by $16$, which gives:
$$ r^2=(x/ 4)^2 \equiv a \pmod p. $$
This implies that $a$ is a quadratic residue modulo $p$. 
\end{proof}

Let's illustrate this condition further on $a=-1$.

\begin{kor}\label{kor:minus1}
Let $n$ be a positive integer such that there exists a positive integer $m$ for which $\{T_n,T_m\}$ is a $D(-1)$-pair. Then both $n$ and $n+1$ are numbers of the form $2^e\cdot p_1^{\alpha_1}\cdots p_l^{\alpha_l}$, where $p_i$ are prime numbers and $p_i\equiv 1\pmod{4}$ holds for each $i\in\{1,\dots,l\}$, $\alpha_1,\dots,\alpha_l$ are positive integers and $e\in\{0,2\}$ for $n$ and $e\in\{0,1\}$ for $n+1$.
\end{kor}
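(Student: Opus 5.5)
The plan is to specialize Theorem~\ref{thm:generalized} to $a=-1$ to control the odd primes, and then to handle the prime $2$ by a separate $2$-adic argument, since Theorem~\ref{thm:generalized} says nothing about it. We work directly with the defining relation $T_nT_m-1=r^2$, that is,
$$T_n\,T_m=r^2+1 .$$

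\textbf{Odd primes.} Let $p$ be an odd prime dividing $n(n+1)$. Since $p\nmid a=-1$, Theorem~\ref{thm:generalized} says that $-1$ must be a quadratic residue modulo $p$. By the first supplementary law this forces $p\equiv 1\pmod 4$. Because $n$ and $n+1$ are coprime, each odd prime divides exactly one of them, so both $n$ and $n+1$ have the shape $2^e p_1^{\alpha_1}\cdots p_l^{\alpha_l}$ with all $p_i\equiv1\pmod 4$.

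\textbf{The prime $2$: a bound.} Since $r^2+1\equiv 1,2\pmod 4$, the product $T_nT_m$ is never divisible by $4$. Hence $v_2(T_n)\le 1$. Now $v_2(T_n)=v_2(n(n+1))-1$, and this gives:
\begin{itemize}
\item if $n$ is even, then $v_2(n)\in\{1,2\}$;
\item if $n$ is odd, then $v_2(n+1)\in\{1,2\}$.
\end{itemize}

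\textbf{The prime $2$: removing the wrong cases.} We combine this bound with the odd-prime information. Any odd number all of whose prime factors are $\equiv1\pmod4$ is itself $\equiv1\pmod4$.
\begin{itemize}
\item If $n$ is even with $v_2(n)=1$, then $n\equiv2\pmod4$, so the odd number $n+1$ satisfies $n+1\equiv3\pmod4$. This contradicts the previous sentence, so for even $n$ we must have $v_2(n)=2$.
\item If $n$ is odd with $v_2(n+1)=2$, then $n\equiv3\pmod4$. This is again impossible for an odd $n$ composed of primes $\equiv1\pmod4$, so for odd $n$ we must have $v_2(n+1)=1$.
\end{itemize}
Together with the trivial exponent $e=0$ for the odd member of $\{n,n+1\}$, this gives $e\in\{0,2\}$ for $n$ and $e\in\{0,1\}$ for $n+1$.

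\textbf{Where the difficulty lies.} The only delicate point is the $2$-adic part. The valuation bound from $r^2+1\not\equiv0\pmod4$ on its own allows $v_2\in\{1,2\}$ for both $n$ and $n+1$. The asymmetric statement only follows after cross-using the mod-$4$ constraint coming from the odd primes.
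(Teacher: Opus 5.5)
Your proof is correct and follows essentially the same route as the paper. You specialize Theorem~\ref{thm:generalized} to $a=-1$ to force odd primes $\equiv 1 \pmod 4$, use that mod-$4$ constraint to exclude $n\equiv 3$ and $n+1\equiv 3 \pmod 4$, and use $4\nmid r^2+1$ to cap the power of $2$. The only difference is order: you bound $v_2(T_n)\le 1$ first and then prune with the mod-$4$ condition, whereas the paper prunes first and then only needs to rule out $8\mid n$.
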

\begin{proof}
Let $a=-1$. From Theorem \ref{thm:generalized} we know that any odd prime factor $p$ of either $n$ or $n+1$ must be such that $-1$ is a quadratic residue modulo $p$. Hence, $p$ must be congruent to $1 \pmod{4}$.

Consequently, neither $n$ nor $n+1$ can be congruent to $3 \pmod{4}$. Since $n$ and $n+1$ are consecutive integers, the only possible congruences for the pair $(n, n+1)$ modulo $4$ are $(1, 2)$ or $(0, 1)$. This immediately eliminates $e=1$ for $n$ (since $n \not\equiv 2 \pmod{4}$) and $e \geq 2$ for $n+1$ (since $n+1 \not\equiv 0 \pmod{4}$). Thus, $e \in \{0, 1\}$ for $n+1$, and $e=0$ or $e \geq 2$ for $n$.

It remains to show that $e \leq 2$ for $n$. Assume, for the sake of contradiction, that $e \geq 3$, which implies $n \equiv 0 \pmod{8}$. From the definition of a $D(-1)$-pair, we have
$$ T_n T_m = r^2 + 1. $$
If $n \equiv 0 \pmod{8}$, then $T_n=\frac{n(n+1)}{2}$ is divisible by $4$. This implies that $r^2 \equiv 3 \pmod{4}$, which is impossible for any integer $r$. Thus, $n$ cannot be a multiple of $8$, proving $e \in \{0, 2\}$ for $n$.
\end{proof}

\begin{rem} 
 Let us show that the converse of Corollary \ref{kor:minus1}, and consequently Theorem \ref{thm:generalized}, does not hold. Number $n=100$ satisfies the necessary conditions of Corollary \ref{kor:minus1}. But there does not exist a pair of integers $(x^*,y^*)$ satisfying the bounds \eqref{ineq_granice_fund}. Hence, the triangular number $T_{100}$ is not a member of any $D(-1)$-pair containing only triangular numbers. It can, however, form a $D(-1)$-pair where the second element is not a triangular number, e.g., $\{T_{100}, 5653\}$ is a $D(-1)$-pair.
\end{rem}

Observe that from inequalities \eqref{ineq_granice_fund} and the fact that $(X_1,Y_1)=(2n+1,2)$, it follows that $y^*<2n+1$ for $n\geq \sqrt[3]{4|a|}$. From this observation and Lemma \ref{lema_svojstva_xy} we see that the fundamental solution with odd integer $y^*\geq 3$ yields a positive integer $m$ strictly smaller than $n$ such that $\{T_m,T_n\}$ is a $D(a)$-pair. This proves the next result. 

  \begin{proposition}
    Let $a\neq 0$ be an integer that is not a perfect square. 
    If a triangular number $T_n$, with $n \geq \sqrt[3]{4|a|}$, is a member of a $D(a)$-pair of triangular numbers, then there exists an integer $n_0$, $1 \leq n_0 < n$, such that the pair $(x,2n+1)$ is a solution to the equation $$x^2 - n_0(n_0+1)y^2 = 16a - n_0(n_0+1).$$
\end{proposition}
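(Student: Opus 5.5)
Since $T_n$ belongs to a $D(a)$-pair $\{T_n,T_m\}$, equation \eqref{pellova_x} has a solution $(x,y)=(4r,2m+1)$ with $y$ odd. This solution lies in some class of solutions, so that class has a fundamental solution $(x^*,y^*)$. By Lemma \ref{lema_svojstva_xy}, $y^*$ is odd, $x^*$ is divisible by $4$, and $y^*\ge 3$ because $a$ is not a perfect square. Set $n_0=(y^*-1)/2\ge 1$.

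We then bound $y^*$ using \eqref{ineq_granice_fund} with $(X_1,Y_1)=(2n+1,2)$ and $|N|=|16a-n(n+1)|\le 16|a|+n(n+1)$. This gives
$$y^*\le \frac{2}{\sqrt{4n}}\sqrt{|N|}=\sqrt{\frac{|N|}{n}}\le\sqrt{\frac{16|a|}{n}+n+1}.$$
We want $y^*<2n+1$, which holds if $16|a|/n+n+1<(2n+1)^2$, i.e. if $16|a|<n(4n^2+3n)$. The hypothesis $n\ge\sqrt[3]{4|a|}$ gives $4n^3\ge 16|a|$, and $3n^2>0$ makes the inequality strict. Hence $y^*<2n+1$, and since $y^*$ is odd, $n_0<n$.

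Finally we apply the symmetry of the equation, written as in \eqref{eq:sym_combined_mixed}: the equation $x^2=16a+n(n+1)(y^2-1)$ is symmetric under swapping $n(n+1)$ with $y^2-1=4n_0(n_0+1)$ when $y=2n_0+1$. Indeed,
$$(x^*)^2=16a+n(n+1)\cdot 4n_0(n_0+1)=16a+n_0(n_0+1)\bigl((2n+1)^2-1\bigr),$$
so $(x^*,2n+1)$ solves $x^2-n_0(n_0+1)y^2=16a-n_0(n_0+1)$.

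The only delicate point is the inequality bookkeeping in the bound on $y^*$, in particular checking that the sign of $N$ does not matter when using the crude bound $|N|\le 16|a|+n(n+1)$. Otherwise the argument is direct. We may also note that $x^*$ could be $0$ or negative, but that is harmless since only $x^2$ enters the equation.
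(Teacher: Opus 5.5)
Your proposal is correct and follows the paper's argument: you bound $y^*$ via \eqref{ineq_granice_fund} with $(X_1,Y_1)=(2n+1,2)$ to get $y^*<2n+1$, use Lemma~\ref{lema_svojstva_xy} to get $y^*$ odd with $y^*\ge 3$, and then exchange the roles of $n$ and $n_0=(y^*-1)/2$ using the symmetry of $x^2=16a+n(n+1)(y^2-1)$. You also spell out the inequality $16|a|<4n^3+3n^2$ that the paper leaves implicit, and the sign of $N$ is indeed harmless, since for $N>0$ Nagell's bound uses $X_1+1$ in place of $X_1-1$ and is therefore even smaller.
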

\medskip 

In the case $a=-1$, this holds for $n\geq 2$. 
 We exploit this property to illustrate an algorithm that systematically identifies all triangular numbers $T_n$ belonging to a $D(-1)$-pair of triangular numbers for $n \leq C$. Starting from the initial value $n=1$, we generate all corresponding solutions $m \leq C$ to the equation \eqref{pellova_m}. By iterating this process and treating each new solution as a seed for subsequent steps, we obtain all such values until they exceed the upper bound $C$.

\begin{example}
Let us set the upper bound to $C=1000$ and search for indices $n$ of members of $D(-1)$-pairs of triangular numbers. Starting with the base case $n=1$, we recursively generate the solutions. 

\begin{table}[h!]
\centering
\caption{Generation of indices $m \leq 1000$ }
\label{table_iteracije}
\renewcommand{\arraystretch}{1.3}
\begin{tabular}{|c|c|l|}
\hline
\textbf{Step} & \textbf{Seed $n$} & \textbf{New solutions $m \in (n, 1000]$} \\
\hline
1 & $1$ & $4, 25, 148, 865$ \\
\hline
2 & $4$ & $25, 457$ \\
\hline
3 & $25$ & $148, 457$ \\
\hline
4 & $148$ & $865$ \\
\hline
5 & $457$ & None \\
\hline
6 & $865$ & None \\
\hline
\end{tabular}
\end{table}

The algorithm naturally terminates at this point, as applying the recurrence relations to $n=457$ and $n=865$ yields solutions that exceed the bound of $1000$. Consequently, the complete set of numbers $n$ up to $1000$ such that $T_n$ is a member of a $D(-1)$-pair of triangular numbers is $\{1, 4, 25, 148, 457, 865\}$. 
\end{example}

\section*{Acknowledgment}
{The author is supported by the Croatian Science Foundation grant no.~IP-2022-10-5008 and by the University of Split, grant no.~IP-UNIST-44, funded by the European Union -- NextGenerationEU.}

\section*{Declarations}

\textbf{Data Availability} \\
Data sharing is not applicable to this article as no datasets were generated or analysed during the current study.

\vspace{1em}

\textbf{Use of Artificial Intelligence} \\
Gemini 3.1 Pro was used to explore research directions and refine language. All core concepts and mathematical proofs are the author's independent work. Claude Opus 4.8 generated an initial outline for Proposition \ref{prop:nonexist}, which the author fully corrected, formalized, and verified due to logical flaws in the AI output. The author assumes complete responsibility for the final manuscript's correctness.

\end{document}